\theoremstyle{plain}
\newtheorem{thm}{Theorem}[section]
\newtheorem{lemma}[thm]{Lemma}
\newtheorem{cor}[thm]{Corollary}
\theoremstyle{definition}
\newcommand{\e}{\varepsilon}
\newcommand{\C}{\mathbb{C}}
\newcommand{\R}{\mathbb{R}}
\DeclareMathOperator{\area}{\mathrm{area}}
\DeclareMathOperator{\diam}{\mathrm{diam}}
\DeclareMathOperator{\Prob}{\mathbb{P}}
\newcommand{\mcl}{\mathcal}
\newcommand{\DD}{\mathbb{D}}
\newcommand{\rad}{{\rm rad}}
\newcommand{\vel}{{\rm VEL}}
\newcommand{\Z}{\mathbb{Z}}
\DeclareMathOperator{\corr}{\mathrm{corr}}
\title[A combinatorial criterion for macroscopic circles]{A combinatorial criterion for macroscopic circles in planar triangulations}
\author{Ori Gurel-Gurevich, Daniel C. Jerison and Asaf Nachmias}
\begin{document}

\begin{abstract}
Given a finite simple triangulation, we estimate the sizes of circles in its circle packing in terms of Cannon's \cite{Cannon} vertex extremal length. Our estimates provide control over the size of the largest circle in the packing. We use them, combined with results from \cite{GMS17b}, to prove that in a proper circle packing of the discrete \emph{mating-of-trees} random map model of Duplantier, Gwynne, Miller and Sheffield \cite{DMS14,GMS17}, the size of the largest circle goes to zero with high probability.
\end{abstract}

\maketitle

\vspace{.2cm}

\section{Introduction}

Koebe's circle packing theorem \cite{K36} (see also \cite{NStFlour18,St05}) is a canonical and widely used method of drawing planar maps. Various geometric properties of the circle packing encode important probabilistic information of the map. For example, a landmark result of He and Schramm \cite{HeSc} states that a bounded degree one-ended triangulation is recurrent if and only if its circle packing has no accumulation points.

In this paper, we estimate the sizes of circles in the circle packing of a planar triangulation in terms of Cannon's \cite{Cannon} \emph{vertex extremal length}. In particular, we provide an if-and-only-if criterion for the property that all the circles in the packing are small. This property is fundamentally important and is believed to hold in all natural models of random planar maps. Proving it for a random simple triangulation on $n$ vertices is an important open problem (see \cite[Section 6]{LeGICM}).

We use our criterion together with estimates of \cite{GMS17b} to prove that the size of the largest circle in the circle packing of the discrete \emph{mating-of-trees} model of Duplantier, Gwynne, Miller and Sheffield \cite{DMS14,GMS17} goes to zero with high probability. When combined with the main theorem of \cite{GGJN19}, this shows that discrete analytic functions on the circle packing embedding of the mating-of-trees map approximate classical analytic functions on the domain of the circle packing.
See the discussion in Section \ref{sec:motivation}.

\subsection{Circle packing and vertex extremal length} \label{sec:CP-VEL}

A {\bf circle packing} of a simple connected planar map $G$ with vertex set $V$ is a collection $\mcl P = \{C_v\}_{v \in V}$ of circles in the plane with disjoint interiors such that $C_v$ is tangent to $C_u$ if and only if $u$ and $v$ form an edge. We further require that for each vertex $v$, the cyclic order of the circles tangent to $C_v$ agrees with the cyclic order of the neighbors of $v$ in $G$. Koebe's \emph{circle packing theorem} mentioned above asserts that any simple connected planar map has a circle packing. Furthermore, when the map is a triangulation, the packing is unique up to M\"obius transformations. See \cite{NStFlour18} for details.

A {\bf triangulation with boundary} is a finite simple connected planar map in which all faces are triangles except for the outer face whose boundary is a simple cycle. Every triangulation with boundary has a circle packing in which the circles corresponding to vertices of the outer face are internally tangent to the unit circle $\partial \DD = \{z : |z|=1\}$ and all other circles are contained in the unit disk $\DD = \{z : |z| < 1\}$ \cite[Claim~4.9]{NStFlour18}. We call this a ``circle packing in $\DD$.'' This packing is unique up to M\"obius transformations from $\DD$ onto itself. A {\bf rooted triangulation with boundary} is a pair $(G, \rho)$ where $G$ is a triangulation with boundary and $\rho$ is a vertex in $V = V(G)$ that does not belong to the outer face. For any such $(G, \rho)$, denote by $\mcl P_\rho = \{C_v\}_{v \in V}$ a circle packing of $G$ in $\DD$ so that the circle $C_\rho$ is centered at the origin. By the aforementioned uniqueness, this circle packing is unique up to rotations. Therefore, the radius of each $C_v$, which we denote by rad$_\rho(v)$, is well-defined.

Our bounds use the notion of \emph{vertex extremal length}, introduced by Cannon \cite{Cannon}. Let $G=(V,E)$ be a graph. Given a function $m:V \to [0,\infty)$ and a finite path $\gamma$ in $G$, we define the {\bf length} of $\gamma$ according to $m$ as
$$ \textrm{len}_m(\gamma) = \sum_{v \in \gamma} m(v) \, .$$
Given a set of finite paths $\Gamma$ in $G$ we define
$$ \textrm{len}_m(\Gamma) = \inf _{\gamma \in \Gamma} \textrm{len}_m(\gamma) \, .$$
The {\bf vertex extremal length} of a set of finite paths $\Gamma$ in $G$ is now defined to be
$$ \textrm{VEL}(\Gamma) = \sup _{m:V\to [0,\infty)} { \textrm{len}_m(\Gamma)^2 \over \area(m) } \, ,$$
where $\area(m)= \sum_{v\in V} m(v)^2$.

Suppose that $(G,\rho)$ is a rooted triangulation with boundary. Given a vertex $v \neq \rho$, we denote by $\Gamma_{\rho,v}$ the set of paths in $G$ starting and ending at $v$ that have winding number $1$ around $\rho$. Also, for any vertex $v$ we denote by $\Gamma_{v}^\partial$ the set of paths in $G$ starting at $v$ and ending at a vertex of the outer face. Our first main result is the following.

\begin{thm} \label{thm:criterion} Let $(G,\rho)$ be a rooted triangulation with boundary and set $V = V(G)$. For any $v \in V$ distinct from $\rho$,
\begin{equation}  {\rm VEL}(\Gamma_{\rho,v}) \leq 4 \,{\rm rad}_\rho(v)^{-1}  \, .  \label{eq:winding}
\end{equation}
Furthermore, for any $v \in V$,
\begin{eqnarray}
 {\rm VEL}(\Gamma_{v}^\partial) \leq 1 +  \frac{1}{2} \log\left( \frac{1}{\rad_\rho(v)} \right)\, .
 \label{eq:boundary}
\end{eqnarray}
\end{thm}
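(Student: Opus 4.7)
The plan is to prove both bounds by a common scheme: given any vertex metric $m : V \to [0, \infty)$, exhibit a specific element of the relevant path family whose $m$-length is controlled by the stated geometric quantity times $\area(m)$. Throughout, work in the packing $\mcl P_\rho$ in $\DD$ with $C_\rho$ centered at the origin, and write $z_u$, $r_u = \rad_\rho(u)$, $d_u = |z_u|$ for the center, radius and distance to the origin of $C_u$. For each $r \in (0,1)$ set
\[
R(r) = \{u \in V : |d_u - r| \leq r_u\}, \qquad L(r) = \sum_{u \in R(r)} m(u),
\]
and note that for $r \in (r_\rho, 1)$ the disks in $R(r)$ cover the level circle $\{|z| = r\}$ and their tangency pattern contains a cycle in $G$ of winding number $1$ around $\rho$.

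For \eqref{eq:winding}, since $C_v \cap C_\rho = \emptyset$ the interval $I_v := [d_v - r_v, d_v + r_v]$ lies in $(r_\rho, 1)$, and $v \in R(r)$ for every $r \in I_v$; the cycle can then be chosen to pass through $v$, giving $\gamma_r \in \Gamma_{\rho,v}$ with $\mathrm{len}_m(\gamma_r) \leq L(r)$. Averaging,
\[
\min_{r \in I_v} L(r) \;\leq\; \frac{1}{2 r_v} \sum_u m(u)\, \ell_u, \qquad \ell_u := \bigl| [d_u - r_u, d_u + r_u] \cap I_v \bigr|.
\]
Two geometric facts drive the estimate: $\ell_u \leq 2 \min(r_u, r_v)$, and $\ell_u > 0$ forces $C_u$ to meet the thin annulus $\{|z| \in I_v\}$ of area at most $4\pi r_v$. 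A Cauchy--Schwarz argument using both of these, together with a separate count of the at most $O(r_v^{-2})$ disks of radius $\gtrsim r_v$ that straddle the annulus, yields $\sum_u m(u)\, \ell_u \leq 4\sqrt{r_v\, \area(m)}$, hence $\min_r L(r) \leq 2\sqrt{\area(m)/r_v}$ and \eqref{eq:winding}.

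For \eqref{eq:boundary}, I would use radial test paths. For each $\theta \in [0, 2\pi)$ let $\gamma_\theta \in \Gamma_v^\partial$ consist of the vertices whose circles are traversed by the segment $\{t e^{i\theta} : t \in [d_v, 1]\}$ (with a bounded combinatorial augmentation where the segment crosses an interstice). Then
\[
\Exp_\theta\bigl[\mathrm{len}_m(\gamma_\theta)\bigr] = \sum_u m(u)\, p_u, \qquad p_u := \Prob_\theta(u \in \gamma_\theta) \leq \tfrac{1}{\pi}\arcsin(r_u / d_u),
\]
and Cauchy--Schwarz gives $(\Exp_\theta[\mathrm{len}_m(\gamma_\theta)])^2 \leq \area(m) \sum_u p_u^2$. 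A dyadic decomposition of the packing into shells $\{u : d_u \in [2^k r_v, 2^{k+1} r_v]\}$, combined with the per-shell area bound $\sum \pi r_u^2 \lesssim 2^{2k} r_v^2$, controls $\sum_u p_u^2$ by $\tfrac{1}{2}\log(1/r_v) + O(1)$, yielding \eqref{eq:boundary} upon selecting a $\theta$ attaining the average.

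The principal difficulty is the Cauchy--Schwarz and area bookkeeping in \eqref{eq:winding}: using only the global packing constraint $\sum_u r_u^2 \leq 1$ produces the weaker bound $\mathrm{VEL}(\Gamma_{\rho,v}) \leq O(1/r_v^2)$, and to sharpen this to $O(1/r_v)$ one must both exploit the localization of contributing disks to an annulus of area $O(r_v)$ and separately handle the anomalously large disks that cross it. A second technical point is the explicit combinatorial construction of the cycle through $v$ from the ring $R(r)$; one must check that this ring contains a cycle in $G$ through $v$ of the correct winding.
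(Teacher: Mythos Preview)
Your scheme for \eqref{eq:winding} is the paper's: average level circles $\{|z|=r\}$ over $r\in I_v=[d_v-r_v,d_v+r_v]$ and apply Cauchy--Schwarz. But the bookkeeping you propose does not close. With $N=O(r_v^{-2})$ large disks each contributing $\ell_u\le 2r_v$, Cauchy--Schwarz gives $\sum_{r_u\ge r_v} m(u)\,\ell_u \le 2r_v\sqrt{N}\sqrt{\area(m)}=O(\sqrt{\area(m)})$, not $O(\sqrt{r_v\,\area(m)})$; after dividing by $2r_v$ this only recovers $\mathrm{VEL}\le O(r_v^{-2})$, the weak bound you explicitly wanted to avoid. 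The paper replaces the case split by one observation: $\ell_u$ is the diameter of a disk $C_u'$ contained in $C_u\cap\{|z|\in I_v\}$ (take as diameter the radial chord of $C_u$, clipped to $I_v$). The $C_u'$ are pairwise disjoint and lie in an annulus of area at most $4\pi r_v$, so directly $\sum_u\ell_u^2\le 16 r_v$, and a single Cauchy--Schwarz finishes. Your worry about extracting a cycle through $v$ from $R(r)$ is handled simply by tracing $\{|z|=r\}$ once around, starting and ending at $v$; for a.e.\ $r$ this circle misses all tangency points and the traversal yields a genuine path in $G$.

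For \eqref{eq:boundary} there is a more basic problem. You work in $\mcl P_\rho$ and average over segments $\{t e^{i\theta}: t\in[d_v,1]\}$; your bound $p_u\le\tfrac{1}{\pi}\arcsin(r_u/d_u)$ is the half-angle of $C_u$ seen from the origin, so these are rays from $0$, and they do not pass through $C_v$ except at the single value $\theta=\arg z_v$. Hence the paths you build are not in $\Gamma_v^\partial$. The paper's remedy is to recenter first: work in the packing $\mcl P_v$ with $C_v$ at the origin, prove $\mathrm{VEL}(\Gamma_v^\partial)\le 1+\tfrac12\log(1/\rad_v(v))$ using rays from the origin (which now all start at $v$), and then observe via the explicit M\"obius transformation that $\rad_\rho(v)<\rad_v(v)$. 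After this recentering your dyadic-shell estimate for $\sum_u p_u^2$ would work up to constants; the paper instead uses the subharmonicity of $|z|^{-2}$ to get the sharp inequality $\sum_{u\ne v}(\rad_v(u)/d(u,v))^2\le 2\log(1/\rad_v(v))$, which delivers the constant $\tfrac12$ with no $O(1)$ slack.
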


The next theorem provides a bound in the other direction. For convenience we impose the convention that $\mathrm{VEL}(\Gamma_{\rho,\rho}) = 0$.

\begin{thm}
\label{thm:converse}
Let $(G,\rho)$ be a rooted triangulation with boundary and set $V = V(G)$. Let $\mcl P_\rho = \{C_v\}_{v \in V}$ be a circle packing of $G$ in $\DD$ such that $C_\rho$ is centered at the origin. If all the circles in $\mcl P_\rho$ have radius at most $\e$, then
\begin{equation} \label{eq:converse}
\min_{v \in V} \Big ( {\rm VEL}(\Gamma_{\rho,v}) \vee {\rm VEL}(\Gamma_{v}^\partial) \Big ) \geq c \log(1/\e)
\end{equation}
for some universal constant $c > 0$.
\end{thm}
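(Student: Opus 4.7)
The proof splits based on the Euclidean distance $r_v = |z_v|$ from the origin to the center of $C_v$ in the packing $\mcl P_\rho$. The main tool is the conformal-radial metric $m(u) = \mathrm{rad}_\rho(u)/r_u$ (with $m(\rho) = 0$), whose total area is bounded by a dyadic-shell argument: decompose $V \setminus \{\rho\}$ into $S_k = \{u : 2^{-k-1} \leq r_u < 2^{-k}\}$ for $k = 0, 1, \ldots, K$ with $K \asymp \log(1/\e)$. Since the centers of the circles in $S_k$ lie in a disk of radius $2^{-k}$, the disjointness of the $C_u$ forces $\sum_{u \in S_k} \mathrm{rad}_\rho(u)^2 \leq 4^{-k}$, so each shell contributes $O(1)$ to $\sum m(u)^2$ and $\area(m) \lesssim \log(1/\e)$.

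In the case $r_v \leq \sqrt{\e}$, I lower bound $\mathrm{VEL}(\Gamma_v^\partial)$. Any path $\gamma = (v=u_0, u_1, \ldots, u_\ell) \in \Gamma_v^\partial$ satisfies $r_{u_\ell} \geq 1 - \e$, and tangency of consecutive circles gives $|r_{u_{i+1}} - r_{u_i}| \leq \mathrm{rad}_\rho(u_i) + \mathrm{rad}_\rho(u_{i+1})$. Combining the elementary bound $|\log(b/a)| \leq |b - a|/\min(a,b)$ with a telescoping sum yields
\[
  \mathrm{len}_m(\gamma) = \sum_i \frac{\mathrm{rad}_\rho(u_i)}{r_{u_i}} \gtrsim \log(r_{u_\ell}/r_v) \gtrsim \log(1/r_v) \geq \tfrac{1}{2}\log(1/\e).
\]
Combined with the area bound, $\mathrm{VEL}(\Gamma_v^\partial) \gtrsim (\log(1/\e))^2/\log(1/\e) = \log(1/\e)$, as desired.

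In the case $r_v > \sqrt{\e}$, I must lower bound $\mathrm{VEL}(\Gamma_{\rho,v})$. The radial metric now gives only an angular integral of order $2\pi$ on a winding loop, which is insufficient, so a new construction is needed. My plan is a barrier-averaging argument: for each dyadic scale $2^{-k}$ between $\mathrm{rad}_\rho(\rho)$ and $r_v$, average over geodesic paths in the packing running from a neighbor of $\rho$ to the outer boundary and passing through $S_k$; weight the resulting vertices so that the area remains $O(\log(1/\e))$. Since a loop in $\Gamma_{\rho,v}$ topologically separates $\rho$ from $\partial \DD$, it must share a vertex with each such barrier, producing an $\Omega(1)$ contribution per scale and a total length of $\Omega(\log(1/\e))$. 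A cleaner route that I would also try is to apply a M\"obius transformation $\phi$ of $\DD$ sending $z_v$ to the origin and reduce to the first case in the transformed packing, tracking how the maximum radius transforms via the conformal distortion $|\phi'|$.

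\textbf{Main obstacle.} Case II is the technical heart. The continuous extremal length of loops winding around $\rho$ in an annulus is only $O(1/\log(1/\e))$, so the desired $\log(1/\e)$ bound cannot come from a conformal-modulus comparison — it must instead be extracted from the discrete rigidity forced by $\mathrm{rad}_\rho(u) \leq \e$ via Koebe/Ring-Lemma geometry. Engineering a single metric whose area is $O(\log(1/\e))$ while forcing every winding loop based at $v$ to pick up $m$-length $\Omega(\log(1/\e))$, uniformly over all $v$ with $r_v$ close to $1$ (in particular, vertices on the outer face), is the main difficulty.
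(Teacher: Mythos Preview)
Your Case I argument (for $r_v \leq \sqrt{\e}$) is essentially sound, but Case II is genuinely incomplete, as you yourself flag. The obstacle you describe, however, is an artifact of having centered your dyadic shells at the origin. The paper's proof sidesteps the difficulty entirely by centering the annuli at $c_v$, the center of $C_v$, rather than at the origin.

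Concretely, set $r_k = 2^k \cdot 5\e$ for $0 \leq k \leq K$ with $K \asymp \log(1/\e)$ and $r_K \in [1/8,1/4]$. Let $B_k$ be the vertices $u$ with $C_u \subset A_{c_v}(r_{k-1}, r_k)$, and put $m(u) = \rad_\rho(u)/r_k$ on $B_k$ (zero elsewhere). Exactly your disjoint-disks area computation gives $\area(m) \leq K$. The point is now purely topological: if $|c_v| \geq 1/2$, then any $\gamma \in \Gamma_{\rho,v}$ winds around the origin and therefore must visit a vertex $v'$ with $|c_{v'} - c_v| \geq 1/4 \geq r_K$; if $|c_v| \leq 1/2$, then any $\gamma \in \Gamma_v^\partial$ reaches a boundary vertex, which is again at distance $\geq 1/4$ from $c_v$. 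In either case the induced curve in the packing crosses every annulus $A_{c_v}(r_{k-1}, r_k)$, and since all radii are at most $\e$ the contribution of $B_k$ to $\mathrm{len}_m(\gamma)$ is at least a universal constant. Hence $\mathrm{len}_m(\gamma) \gtrsim K$ and $\mathrm{VEL} \gtrsim K^2/K = K \asymp \log(1/\e)$.

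So the missing idea is simply: a loop based at $v$ that winds around a point at Euclidean distance $\geq 1/2$ from $v$ must itself travel Euclidean distance $\geq 1/2$ from $v$. Once you see this, a single metric handles both cases uniformly, and there is no need for barrier-averaging, M\"obius reduction, or the $\sqrt{\e}$ case split. Your worry about the continuous modulus of winding loops being small is beside the point: you are not measuring the width of the loop family, but the fact that each individual loop is long because it must travel macroscopically far from its basepoint.
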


We say that a sequence $(G_n,\rho_n)$ of rooted triangulations with boundary has {\bf no macroscopic circles} if $\max _{v \in V(G_n)} \textrm{rad}_{\rho_n}(v) \to 0$ as $n\to\infty$.
Combining the two theorems above, we obtain an if-and-only-if criterion for the existence of macroscopic circles.

\begin{cor} Let $(G_n,\rho_n)$ be a sequence of rooted triangulations with boundary. Then  $(G_n,\rho_n)$ has no macroscopic circles if and only if
$$ \min_{v \in V(G_n)} \Big ( {\rm VEL}(\Gamma_{\rho_n,v}) \vee {\rm VEL}(\Gamma_{v}^\partial) \Big ) \underset{n\to\infty}{\longrightarrow} \infty \, .$$
\end{cor}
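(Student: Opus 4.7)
The corollary is an immediate consequence of Theorems~\ref{thm:criterion} and~\ref{thm:converse}. The plan is to handle the two directions separately.

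For the ``only if'' direction (no macroscopic circles $\Rightarrow$ the VEL minimum tends to infinity), I would argue directly from Theorem~\ref{thm:converse}. Fix any $M > 0$. Since $\max_v \rad_{\rho_n}(v) \to 0$, for all sufficiently large $n$ every circle in $\mcl{P}_{\rho_n}$ has radius at most $\e_n$ with $\e_n \to 0$. Applying Theorem~\ref{thm:converse} with $\e = \e_n$, we get
\[
\min_{v \in V(G_n)} \bigl( {\rm VEL}(\Gamma_{\rho_n,v}) \vee {\rm VEL}(\Gamma_v^\partial) \bigr) \;\geq\; c \log(1/\e_n),
\]
and the right-hand side tends to infinity, so eventually exceeds $M$.

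For the ``if'' direction, I would argue by contrapositive: if there are macroscopic circles, I exhibit a vertex $v_n$ for which both VELs are bounded. Suppose, along a subsequence, there exists $v_n \in V(G_n)$ with $\rad_{\rho_n}(v_n) \geq \delta$ for some fixed $\delta > 0$. I split into two cases. If $v_n \neq \rho_n$, then~\eqref{eq:winding} gives ${\rm VEL}(\Gamma_{\rho_n, v_n}) \leq 4/\delta$ and~\eqref{eq:boundary} gives ${\rm VEL}(\Gamma_{v_n}^\partial) \leq 1 + \tfrac{1}{2}\log(1/\delta)$. If $v_n = \rho_n$, the convention $\mathrm{VEL}(\Gamma_{\rho_n,\rho_n}) = 0$ trivially bounds the first term, while~\eqref{eq:boundary} applied to $\rho_n$ bounds the second. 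In either case
\[
\min_{v} \bigl( {\rm VEL}(\Gamma_{\rho_n,v}) \vee {\rm VEL}(\Gamma_v^\partial) \bigr) \;\leq\; \max\!\left( 4/\delta,\; 1 + \tfrac{1}{2}\log(1/\delta) \right),
\]
so the VEL minimum does not tend to infinity. This contradicts the hypothesis and completes the proof.

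There is no real obstacle here; the main bookkeeping point is to remember to invoke \emph{both} bounds of Theorem~\ref{thm:criterion} in the contrapositive, so that the maximum (``$\vee$'') appearing inside the minimum is controlled, and to separately handle the degenerate case $v_n = \rho_n$ where~\eqref{eq:winding} does not apply and the convention $\mathrm{VEL}(\Gamma_{\rho,\rho}) = 0$ takes over.
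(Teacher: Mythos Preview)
Your proof is correct and is precisely the argument the paper has in mind; the paper itself does not spell out any details, merely stating that the corollary follows by combining Theorems~\ref{thm:criterion} and~\ref{thm:converse}, and your write-up is exactly that combination.
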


The vertex extremal length is an ``embedding-invariant'' quantity that encodes various geometric properties of embeddings in which each vertex corresponds to a cell whose squared diameter is comparable to its area. The relationship between squared diameter and area might be precise (as in the case of circle packing) or might hold only in a rough averaged sense. Either way, we can use vertex extremal length to infer that if one such embedding of a map does not possess macroscopic cells, then neither does the other. We carry out this procedure in Sections \ref{sec:intromating} and \ref{sec:matingproof} for the ``mating-of-trees'' random map model, which we now describe.

\subsection{Mating of trees}\label{sec:intromating}
The discrete ``mating-of-trees'' is a random map model constructed and studied by Duplantier, Gwynne, Miller and Sheffield \cite{DMS14,GMS17,GMS17b}. The model is parametrized by a real number $\gamma\in(0,2)$ and is constructed to be in the universality class of the $\gamma$-LQG surface. Given $\gamma \in (0,2)$ fixed, for each $\e > 0$ the model defines an infinite random planar triangulation $\mcl G^\e$ on the vertex set $\e \Z$. In this paper we will not use the definition of $\mcl G^\e$ directly, instead using properties of $\mcl G^\e$ that have been proved in \cite{DMS14,GMS17,GMS17b}. Nevertheless, for the sake of completeness we now provide the definition of $\mcl G^\e$ as it appears in \cite{GMS17b}.

Start with a standard two-sided planar Brownian motion which is at the origin at time $0$. Apply an appropriate linear transformation so that both coordinates of the resulting process $(L,R): \R \to \R^2$ are standard two-sided linear Brownian motions with correlation $\corr(L_t, R_t) = -\cos(\pi \gamma^2 / 4)$ for all $t \in \R$ except $t = 0$ (where $L_0 = R_0 = 0$). Given $\e > 0$, draw the planar map $\mcl G^\e$ on the vertex set $\e \Z$ by first drawing an edge between each pair of consecutive vertices $x, x+\e$ so that the union of all these edges is a horizontal line. Next, for each pair of vertices $x_1,x_2 \in \e \Z$ such that $x_2 > x_1 + \e$ and
\begin{equation} \label{eq:GMS-Brownian}
\left( \inf_{t \in [x_1 - \e, x_1]} L_t \right) \vee \left( \inf_{t \in [x_2 - \e, x_2]} L_t \right) \leq \inf_{t \in [x_1, x_2 - \e]} L_t \, ,
\end{equation}
draw an edge between $x_1$ and $x_2$ in the space below the horizontal line. Finally, for each pair of vertices $x_1,x_2 \in \e \Z$ such that $x_2 > x_1 + \e$ and \eqref{eq:GMS-Brownian} holds with $R_t$ in place of $L_t$, draw an edge between $x_1$ and $x_2$ in the space above the horizontal line. Figure \ref{fig:mated-crt-map} shows a geometric interpretation of this process. The map $\mcl G^\e$ is almost surely a triangulation, with each pair of vertices connected by at most two edges.

\begin{figure}
\centering
\includegraphics[width=0.8\textwidth]{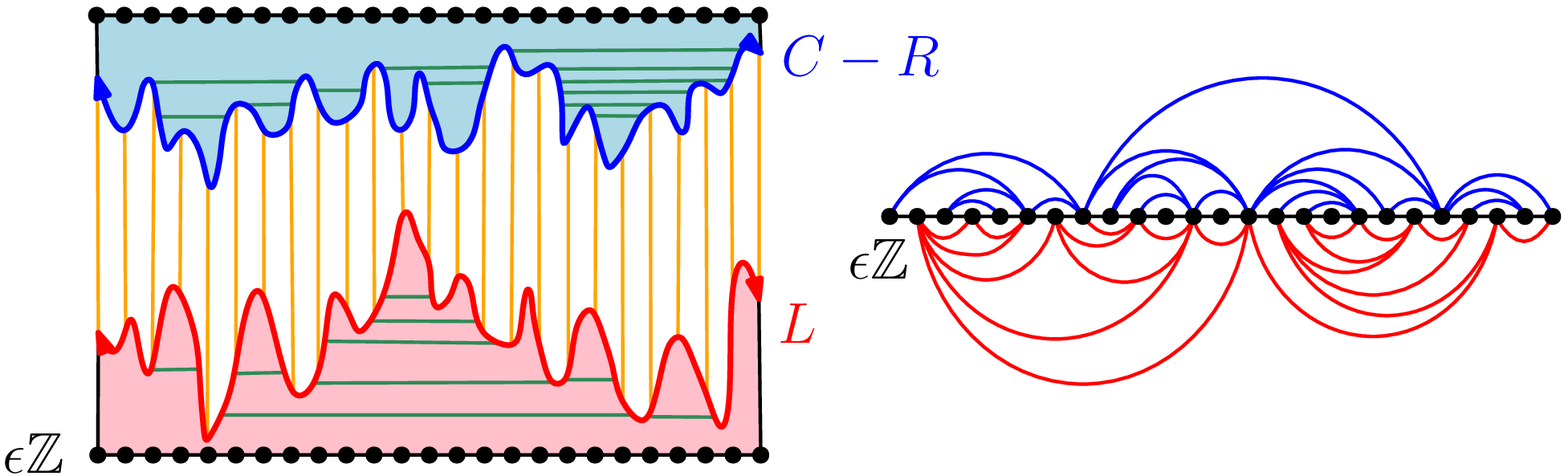}
\caption{To draw the map $\mcl G^\e$, choose a constant $C$ large enough that the graphs of the Brownian motions $L_t$ and $C-R_t$ do not intersect for $t$ in a given interval. The left image shows the graphs of $L$ and $C - R$ on the interval $[0, 12\e]$. Each $x \in \e \Z$ corresponds to the vertical strip $\{(t,y) : t \in [x-\e, x],\, y \in [L_t, C - R_t] \}$, which is bounded by orange vertical lines. Given $x_1,x_2 \in \e \Z$ with $x_2 > x_1 + \e$, equation \eqref{eq:GMS-Brownian} or its analogue for $R_t$ is satisfied if and only if one can draw a horizontal line between the strips corresponding to $x_1$ and $x_2$ that stays below $L$ or above $C-R$, respectively. Such horizontal lines are drawn in green (including some whose endpoints are outside the pictured interval). The right image is a drawing of $\mcl G^\e$ where the edges coming from green horizontal lines below $L$ are drawn in red and the edges coming from green horizontal lines above $C-R$ are drawn in blue. Images are from \cite{GMS17b} and are reproduced with permission.}
\label{fig:mated-crt-map}
\end{figure}

Many natural models of random planar maps can be encoded by a discrete version of the mating-of-trees map in which the Brownian motions $L,R$ are replaced by discrete time random walks. See \cite{GMS17b} for references. Thus the mating-of-trees map can be understood as a coarse-grained approximation of these models and can be used to derive new results about them. For instance, Gwynne and Miller \cite{GM18} have recently shown using this approach that the spectral dimension of the \emph{uniform infinite planar triangulation} (UIPT) is almost surely $2$.

A remarkable feature of the map $\mcl G^\e$ is that it comes with an \emph{a priori embedding} in $\C$. See \cite[Theorem 1.9]{DMS14} and \cite[Proposition 2.2]{GMS17}. This embedding plays a central role in the precise formulation of our small-circles result, Theorem \ref{thm:nomacroscopic}, and in its proof. It has the following properties. To each vertex $v$ of $\mcl G^\e$ there is an associated \emph{cell} $H_v^\e$, which is a compact connected subset of $\C$ such that the interiors of all cells are pairwise disjoint and vertices are adjacent in $\mcl G^\e$ if and only if their corresponding cells share a non-trivial connected boundary arc. The union of all cells is the whole plane. We remark that $\mcl G^\e$ has the same distribution for all $\e>0$ (by the scale-invariance of Brownian motion) but the a priori embedding is different. For a set $D \subset \C$ we write
$$ \mcl V \mcl G ^\e (D) = \{ v \in \mcl V \mcl G^\e : H_v^\e \cap D \neq \varnothing \} \, ,$$
where $\mcl V \mcl G^\e$ is the vertex set of $\mcl G^\e$. The set $\mcl V \mcl G ^\e (D)$ is finite if $D$ is bounded.

Let $B(0,\rho)$ denote the disk $\{z: |z| < \rho\}$. In \cite{GMS17b} various estimates are proven concerning the sets $\mcl V \mcl G^\e(B(0,\rho))$ for $\rho < 1$. For convenience, we scale the a priori embedding by a factor of $2$ so that the set $\mcl V \mcl G^\e(\DD)$ in our normalization is the same as the set $\mcl V \mcl G^\e(B(0,1/2))$ in the language of \cite{GMS17b}.

Under this scaling, let $\mcl G^\e(\DD)$ be the submap of $\mcl G^\e$ whose vertex set is $\mcl V \mcl G ^\e (\DD)$ and whose edges are the edges of $\mcl G^\e$ spanned by $\mcl V \mcl G ^\e (\DD)$. When $\e$ is small, with high probability the maximal cell diameter in $\mcl G^\e(\DD)$ is at most $\e^q$ for some constant $q > 0$ \cite[Lemma 2.7]{GMS17b}. In the construction that follows, we assume that this event occurs.

We make minor modifications to turn $\mcl G^\e(\DD)$ into a (simple) triangulation with boundary. These modifications are illustrated in Figure \ref{GMS-mods}. First, if $\mcl G^\e(\DD)$ is not already $2$-connected, we observe that all the vertices $v$ for which $H_v^\e$ intersects the slightly smaller disk $\{z : |z| < 1 - \e^q \}$ are contained in the same block (maximal $2$-connected component) of $\mcl G^\e(\DD)$. We restrict to this block, which we call $\mcl G_0^\e$. This cuts off the ``dangling ends'' associated with repeated vertices in the boundary of the outer face of $\mcl G^\e(\DD)$; the boundary of the new outer face is a simple cycle \cite[Proposition 4.2.5]{D17} whose vertices and edges are all part of the old boundary.

\begin{figure}
\centering
\includegraphics[width=0.7\textwidth]{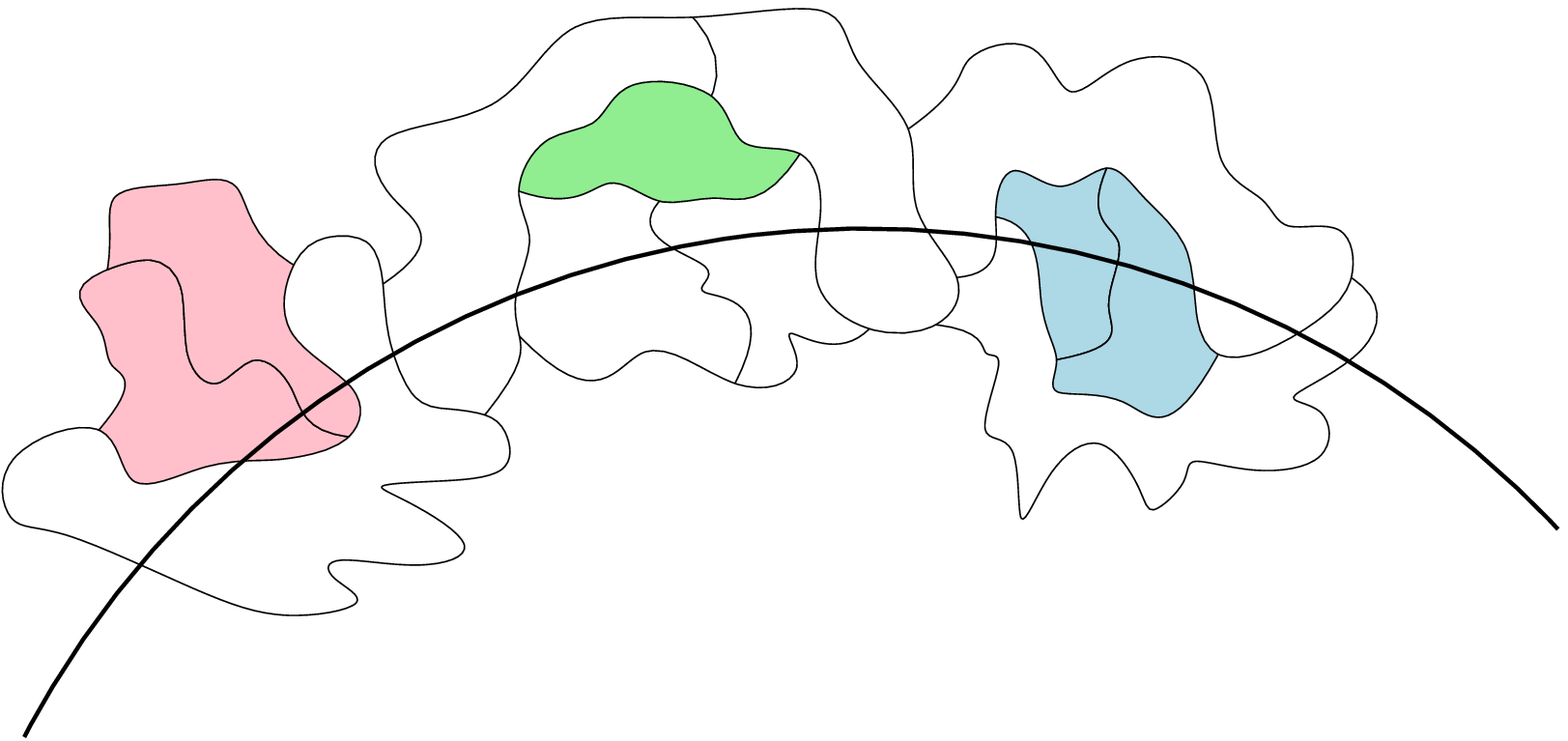}
\caption{Modification of $\mcl G^\e(\DD)$ into $\mcl G_2^\e$. Several cells $H_v^\e$ are shown along with a portion of the unit circle (thick black curve). The transition from $\mcl G^\e(\DD)$ to $\mcl G_0^\e$ deletes the vertices whose cells are colored in red. The transition from $\mcl G_0^\e$ to $\mcl G_1^\e$ adds the vertex whose cell is colored in green. Finally, the transition from $\mcl G_1^\e$ to $\mcl G_2^\e$ deletes the vertices whose cells are colored in blue and absorbs the blue area either into the cell above or into the cell below.}
\label{GMS-mods}	
\end{figure}

The map $\mcl G_0^\e$ might have inner faces which are not triangles because it is possible for a cycle in $\mcl G_0^\e$ to enclose vertices in $\mcl V \mcl G^\e \setminus \mcl V \mcl G_0^\e$. In this case we simply add to the map all of these enclosed vertices and their incident edges and denote the resulting map by $\mcl G_1^\e$. The boundary of the outer face is unchanged by this procedure.

Now the inner faces of $\mcl G_1^\e$ are all triangles, but the map is not necessarily simple. As described above, between any given pair of vertices there might be two edges (but no more than two). At each occurrence of such parallel edges we collapse them to a single edge and erase all the vertices and edges between them. Furthermore, in the embedding we assign the space of the erased cells arbitrarily to one of the two surrounding cells corresponding to the vertices of the double edge. This guarantees simplicity. We denote this map by $\mcl G_2^\e$ and observe that it is a triangulation with boundary according to the definition in Section \ref{sec:CP-VEL}. Indeed, $\mcl G_2^\e$ is a simple submap of $\mcl G_1^\e$ with the same outer face. Lastly, the vertex whose corresponding cell contains the origin (or any one of the vertices with this property, in case the origin is on a cell boundary) is declared to be the root and denoted by $\rho$. Since $\rho$ is not a vertex of the outer face, we have that $(\mcl G_2^\e, \rho)$ is a rooted triangulation with boundary.

As in Section \ref{sec:CP-VEL}, let $\mcl P^\e_\rho = \{C_v\}_{v \in \mcl V \mcl G_2^\e}$ be a circle packing of $\mcl G_2^\e$ in $\DD$ such that $C_\rho$ is centered at the origin. This packing is unique up to rotations. Denote the radius of each $C_v$ by $\rad^\e_\rho(v)$. The following theorem bounds the maximum radius in $\mcl P_\rho^\e$ with high probability. It implies that for a sequence $\e_n$ tending to $0$ sufficiently fast, the sequence of random rooted triangulations with boundary $(\mcl G_2^{\e_n}, \rho_n)$ almost surely has no macroscopic circles.

\begin{thm} \label{thm:nomacroscopic} There exist constants $C,c>0$, depending only on the parameter $\gamma \in (0,2)$, such that with probability at least $1-\e^c$,
$$ \max_{v\in \mcl V \mcl G_2^\e}\,\, \rad_\rho^\e (v) \leq {C \over \log(1/\e)} \, .$$
\end{thm}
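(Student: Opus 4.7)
The plan is to apply Theorem~\ref{thm:criterion}, specifically bound~\eqref{eq:boundary}, in contrapositive form. Rewriting \eqref{eq:boundary} as $\rad_\rho^\e(v) \le \exp(2 - 2\,\mathrm{VEL}(\Gamma_v^\partial))$, to obtain $\rad_\rho^\e(v) \le C/\log(1/\e)$ it is enough to prove
\[
\mathrm{VEL}(\Gamma_v^\partial) \;\ge\; \tfrac12 \log\log(1/\e) + O(1)
\]
for every $v \in \mcl V\mcl G_2^\e$. This is only a $\log\log$ threshold, so there is significant slack; we plan to prove a $\log(1/\e)$-order lower bound at most vertices and to use the slack for the thin set near the boundary.

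The main analytic step is a dyadic annulus extremal-length computation in the a priori embedding, supplied with two ingredients from \cite{GMS17b}: (a) the uniform cell-diameter bound $\max_v \mathrm{diam}(H_v^\e) \le \e^q$ already cited in the text; and (b) a packing-type estimate controlling $\sum_{u}\mathrm{diam}(H_u^\e)^2$ restricted to cells meeting any Euclidean disk of radius $\ge \e^q$ by a constant times the Euclidean area of that disk, up to at most a polylog$(1/\e)$ loss. Given (a) and (b), for a vertex $v$ set $\delta_v := \dist(H_v^\e, \partial\DD)$ and consider the test metric $m(u) := \mathrm{diam}(H_u^\e)/|z_u - z_v|$ supported on the annulus $\{2\e^q \le |z - z_v| \le \delta_v/2\}$. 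Dyadic decomposition together with (b) yields $\area(m) \lesssim \log(\delta_v/\e^q)$, while every graph path in $\Gamma_v^\partial$ projects to a continuous curve crossing this annulus radially, so (a) forces $\mathrm{len}_m(\gamma) \gtrsim \log(\delta_v/\e^q)$. Hence $\mathrm{VEL}(\Gamma_v^\partial) \gtrsim \log(\delta_v/\e^q)$.

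For the overwhelming majority of vertices $\delta_v$ is at least a fixed positive power of $\e^q$, so this already gives $\mathrm{VEL}(\Gamma_v^\partial) \gtrsim \log(1/\e)$ and through \eqref{eq:boundary} the much stronger polynomial bound $\rad_\rho^\e(v) \le \e^{c'}$. The bound degrades only for vertices in the thin boundary layer where $\delta_v$ is comparable to $\e^q$; in particular, for $v$ on the outer face of $\mcl G_2^\e$ we have $\mathrm{VEL}(\Gamma_v^\partial) \le 1$ trivially (the single-vertex path is in $\Gamma_v^\partial$), and \eqref{eq:boundary} gives no nontrivial information there.

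The main obstacle is therefore handling this boundary layer. The plan is to combine further input from \cite{GMS17b} forcing the length of the outer face cycle of $\mcl G_2^\e$ to be at least a positive power of $1/\e$ with a direct circle-packing argument: the outer face forms a long ``necklace'' of circles internally tangent to $\partial \DD$, and combining this with the uniform diameter bound (a) (used to compare graph distance to Euclidean distance near $\partial\DD$) leads, via a ring-lemma-style bootstrap, to $\rad_\rho^\e(v) \le C/\log(1/\e)$ for every boundary-layer vertex. All the high-probability events from \cite{GMS17b} combine into the final $1 - \e^c$ probability bound.
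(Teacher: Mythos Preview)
Your bulk argument (the dyadic-annulus test metric in the a~priori embedding, with $\area(m)$ controlled by the squared-diameter estimate from \cite{GMS17b}) is essentially the same as the paper's. The genuine gap is the boundary layer.

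For a vertex $v$ on the outer face you have $\mathrm{VEL}(\Gamma_v^\partial)=1$, so \eqref{eq:boundary} is vacuous, as you observe. Your proposed fix does not work. First, a long boundary cycle by itself does not prevent a single macroscopic boundary circle: one can place a boundary circle of radius $1/2$ tangent to $\partial\DD$ and surround it by an arbitrarily long chain of tiny boundary circles. Second, a ``ring-lemma-style bootstrap'' needs a degree bound to compare radii of adjacent circles, and $\mcl G_2^\e$ has no such bound. Third, the uniform cell-diameter estimate (a) lives in the a~priori embedding; it lets you convert Euclidean distance in that embedding into graph distance, but it says nothing about Euclidean distances in the circle packing, which is an entirely different, a~priori uncontrolled, embedding of the same graph. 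So none of the three ingredients you list can reach into the packing to bound $\rad_\rho^\e(v)$ for boundary-layer $v$.

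The paper's remedy is to use the other half of Theorem~\ref{thm:criterion}. It splits vertices according to the a~priori embedding: if the cell $H_v^\e$ meets $\{|z|\le 1/2\}$ then $\delta_v\ge 1/2$ and your own $\Gamma_v^\partial$ argument gives $\mathrm{VEL}(\Gamma_v^\partial)\gtrsim\log(1/\e)$; if $H_v^\e$ meets $\{|z|\ge 1/2\}$ the paper bounds $\mathrm{VEL}(\Gamma_{\rho,v})$ instead, using exactly the same dyadic test metric centered at $v$. The point is that any closed path in $\Gamma_{\rho,v}$, when drawn through the cells, must enclose $H_\rho^\e$ (which lies within $2\e^q$ of the origin) and hence must cross every annulus $A_{z_v}(r_{k-1},r_k)$ out to radius $1/4$, regardless of how close $v$ is to $\partial\DD$. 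This yields $\mathrm{VEL}(\Gamma_{\rho,v})\gtrsim\log(1/\e)$ and then \eqref{eq:winding} gives $\rad_\rho^\e(v)\le 4/\mathrm{VEL}(\Gamma_{\rho,v})\le C/\log(1/\e)$. So the missing idea is precisely to switch from \eqref{eq:boundary} to \eqref{eq:winding} for vertices whose cells sit in the outer half of $\DD$ in the a~priori embedding; this removes the boundary layer entirely and no separate necklace argument is needed.
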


Note that the low probability $\e^c$ encompasses both the event that the modification of $\mcl G^\e(\DD)$ into $\mcl G_2^\e$ fails due to the presence of a very large cell, and the event that the modification succeeds but the circle packing has a very large circle.

\subsection{Discrete complex analysis on random planar maps}\label{sec:motivation}

Discrete complex analysis has been pivotal in the study of statistical physics on two-dimensional lattices \cite{SmirnovPerc,SmirnovIsing,ChelkakSmirnov2011,ChelkakSmirnovIsing,SmirnovICM}. Recently, statistical physics on random planar maps has drawn a great deal of attention due to the conjectured KPZ correspondence, see \cite{Garban13,DS11}. Random planar maps are combinatorial objects and do not come equipped with a canonical embedding in the plane. Thus, a major challenge is to find an embedding on which discrete complex analysis can be performed. The motivation of the current paper and its companion \cite{GGJN19} is to address this challenge. Indeed, the combination of Theorem \ref{thm:nomacroscopic} and \cite[Theorem 1.1]{GGJN19} enables one to perform discrete complex analysis on the circle packing embedding of the mating-of-trees random map model. We provide a brief explanation here and refer the reader to \cite{GGJN19} for further details.

A natural candidate for the embedding of a generic planar map is the \emph{orthodiagonal} representation. An \emph{orthodiagonal map} is a plane graph having quadrilateral faces with orthogonal diagonals. It turns out that any simple 3-connected finite planar map, in particular any simple triangulation, can be represented by an orthodiagonal map via the circle packing theorem, see \cite[Section 2]{GGJN19}. Furthermore, Duffin \cite{D68} showed that orthodiagonal maps admit a very natural form of discrete analyticity: a complex-valued function $f$ is said to be {\bf discrete analytic} if for every inner quadrilateral $[v_1,w_1,v_2,w_2]$ of the map,
 \[
\frac{f(v_2) - f(v_1)}{v_2 - v_1} = \frac{f(w_2) - f(w_1)}{w_2 - w_1} \, .
\]
With this definition it can be shown that discrete contour integrals of discrete analytic functions vanish, and that the real part of a discrete analytic function is discrete harmonic with respect to positive real edge weights induced by the map, mirroring classical complex analysis theory. It is a natural and highly applicable question to ask whether such functions are close to continuous holomorphic functions---the answer is positive.

Indeed, following the work of \cite{D99,S13,W15}, in \cite{GGJN19} we prove a general result regarding the convergence of discrete harmonic functions on orthodiagonal maps to their continuous counterparts.\footnote{We emphasize that the discrete harmonic functions in \cite{GGJN19}, just as in \cite{D99,S13,W15}, are with respect to natural edge weights determined by the orthodiagonal map and not with respect to unit weights. See equation (1) in \cite[Section 1.1]{GGJN19}.} Our main contribution is to drop several local and global regularity conditions, such as bounded vertex degrees, present in the previous results \cite{D99,S13,W15}. These conditions have prevented such convergence results from applying to random planar maps.
By contrast, our convergence statement \cite[Theorem 1.1]{GGJN19} holds for any random map model that has an orthodiagonal representation with maximal edge length going to $0$. Theorem \ref{thm:nomacroscopic} shows that the circle packing in the unit disk $\DD$ of the mating-of-trees random triangulation $\mcl G_2^\e$ induces such an orthodiagonal representation. For further details, see \cite[Corollary 2.2]{GGJN19}. It follows that discrete harmonic and analytic functions on the circle packing embedding of $\mcl G_2^\e$ in $\DD$ converge to their continuous counterparts as $\e$ tends to $0$.

\section{Proofs} \label{sec:proofs}

\subsection{Proof of Theorem \ref{thm:criterion}}

We begin by proving \eqref{eq:winding}. By applying a rotation we may assume that the center of the circle $C_v$ lies on the positive $x$-axis, and we denote by $t_2 > t_1 > 0$ the two intersection points of $C_v$ with the $x$-axis. We have that $t_2 - t_1 = 2{\rm rad}_\rho(v)$. For each $t \in [t_1,t_2]$ let $C_t$ be the circle of radius $t$ around the origin (this is \emph{not} an element of the circle packing $\mcl P_\rho$). Let $\gamma_t$ denote the finite simple path in $G$ obtained from $C_t$ by starting at the point $(t,0)$ and the vertex $v$, then traversing $C_t$ counterclockwise concatenating to $\gamma_t$ the vertex of $G$ corresponding to any new circle of $\mcl P_\rho$ that we encounter, until we visit $C_v$ in the last step. See Figure \ref{fig:circlepath}. Note that for almost every $t \in [t_1,t_2]$ the circle $C_t$ does not intersect any tangency point of $\mcl P_\rho$, so $\gamma_t$ is well defined for this set of $t$'s. Since we start and end at $v$ and wind around the origin a single time, we have that $\gamma_t \in \Gamma_{\rho,v}$ for almost every $t\in[t_1,t_2]$.

Now, given $m:V \to [0,\infty)$ we have that
$$ \int_{t_1}^{t_2} {\rm len}_m(\gamma_t) dt = \sum_{u\in V} m(u) \int _{t_1}^{t_2} {\bf 1}_{\{u \in \gamma_t\}}dt \, .$$
The vertices $u$ that contribute to the sum are those for which the interior of $C_u$ intersects the annulus $A = \{z : t_1 \leq |z| \leq t_2\}$. Given such a circle $C_u$, label its closest point to the origin by $r_1 e^{i \varphi}$ and its farthest point from the origin by $r_2 e^{i \varphi}$ (the angles are the same). Then
\[
\int_{t_1}^{t_2} {\bf 1}_{\{u \in \gamma_t\}}dt = (r_2 \wedge t_2) - (r_1 \vee t_1) = 2\rad'_\rho(u)
\]
where $\rad'_\rho(u)$ is the radius of the circle $C'_u$ for which the line segment between $(r_1 \vee t_1)e^{i \varphi}$ and $(r_2 \wedge t_2)e^{i \varphi}$ is a diameter. One such circle $C'_u$ is illustrated in Figure \ref{fig:circlepath}. The circles $C'_u$ are all inside the annulus $A$, and they are internally disjoint because each $C'_u$ is contained inside $C_u$. Hence
\[
\sum_u (\rad'_\rho(u))^2 \leq t_2^2 - t_1^2 \leq 2(t_2 - t_1)
\]
and, by Cauchy-Schwarz,
\[
\begin{split}
\int_{t_1}^{t_2} {\rm len}_m(\gamma_t) dt = 2 \sum_u m(u) \rad'_\rho(u) &\leq 2 \left( \area(m) \sum_u (\rad'_\rho(u))^2 \right)^{1/2} \\
&\leq 2\Big( 2 \area(m) (t_2 - t_1) \Big)^{1/2}\,.
\end{split}
\]

We deduce that there must exist $t\in(t_1,t_2)$ such that
$$ {\rm len}_m(\gamma_t) \leq { 2 \sqrt{2\area(m)}  \over \sqrt{t_2-t_1} } = 2 \sqrt{\area(m) \over {\rm rad}_\rho(v)}  \, .$$
Plugging this into the definition of VEL$(\Gamma_{\rho,v})$ concludes the proof of \eqref{eq:winding}.

\begin{figure}
\centering
\begin{subfigure}{0.45\textwidth}
   \centering
   \includegraphics[width=1\textwidth]{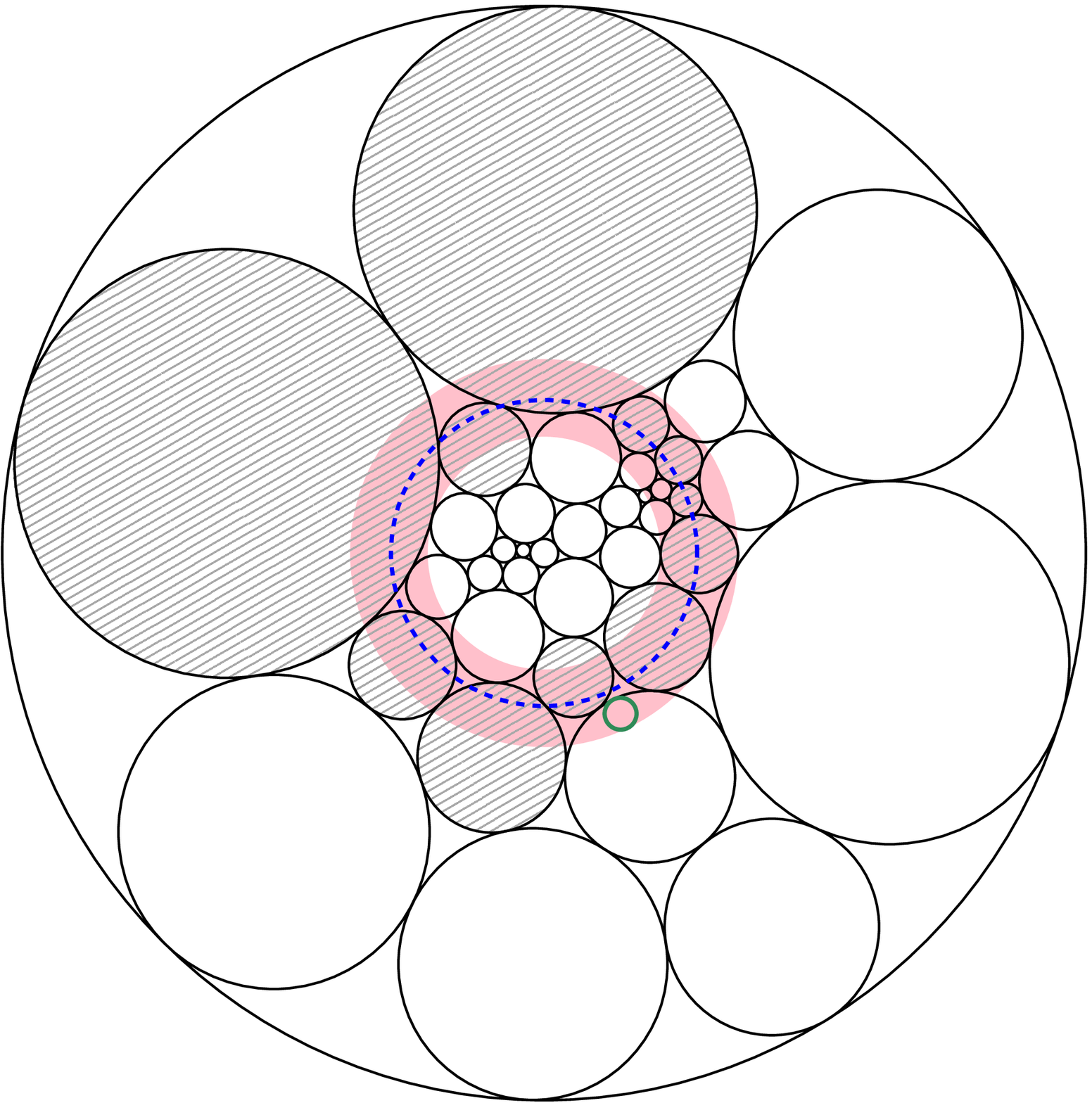}
   \caption{Construction of the path $\gamma_t$. The red annulus is $A = \{z : t_1 \leq |z| \leq t_2\}$ and the blue dashed circle is $C_t$. The circles corresponding to vertices in $\gamma_t$ are shaded in gray. Additionally, one of the circles $C'_u$ is drawn in green.}
   \label{fig:circlepath}
\end{subfigure}
\quad
\begin{subfigure}{0.45\textwidth}
   \centering
   \includegraphics[width=1\textwidth]{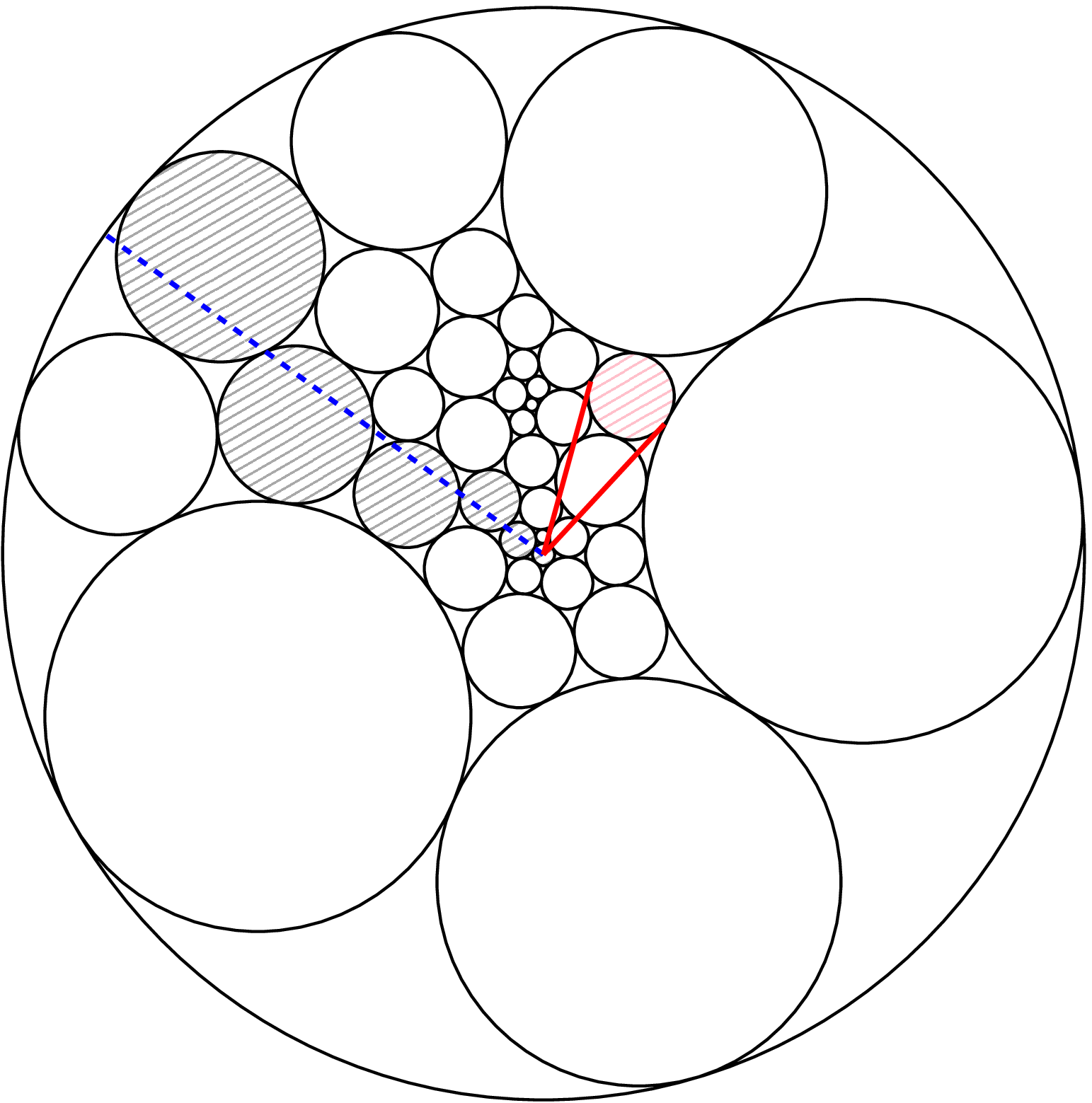}
   \caption{Construction of the path $\gamma_\theta$. The blue dashed line is $\ell_\theta$ and the circles corresponding to vertices in $\gamma_\theta$ are shaded in gray. The solid red segments are the tangent lines from the origin to a circle $C_u$ which is shaded in red.}
   \label{fig:diskpath}
\end{subfigure}
\caption{Illustrations of various parts of the proof of Theoeem \ref{thm:criterion}.}
\label{fig:VEL-paths}
\end{figure}

We now prove \eqref{eq:boundary}. If $v$ is a vertex of the outer face of $G$ then ${\rm VEL}(\Gamma_v^\partial) = 1$. In all other cases we will show that
\begin{equation} \label{eq:rad_vv}
{\rm VEL}(\Gamma_{v}^\partial) \leq 1 +  \frac{1}{2} \log\left( \frac{1}{\rad_v(v)} \right)\, .
\end{equation}
This implies \eqref{eq:boundary} because $\rad_\rho(v) < \rad_v(v)$ whenever $\rho \neq v$, as we now explain. To get from the packing $\mcl P_v$ to $\mcl P_\rho$, one applies a M\"{o}bius transformation of the form $z \mapsto e^{i\theta} (z + w)/(1 + \overline{w}z)$ for some $0 < |w| < 1$. This transforms a circle of radius $r < 1$ centered at the origin into a circle of radius $r(1 - |w|^2)/(1 - |w|^2 r^2) < r$.

We consider the packing $\mcl P_v = \{C_u\}_{u \in V}$, where $C_v$ is centered at the origin. For $\theta\in (0,2\pi)$ let $\ell_\theta$ be the straight line in the plane of angle $\theta$ starting from the origin and ending at $\partial \DD$. Let $\gamma_\theta$ be the path in $G$ obtained by taking the circles of $\mcl P_v$ that $\ell_\theta$ intersects according to the order in which they are intersected. See Figure \ref{fig:diskpath}. For almost every $\theta$ this is a finite simple path in $G$ which starts at $v$ and ends in one of the vertices of the outer face. As before, given any $m:V\to [0,\infty)$ we have that
$$ \int_{0}^{2\pi} {\rm len}_m(\gamma_\theta)d\theta = \sum_{u} m(u) \int_{0}^{2\pi} {\bf 1}_{\{u \in \gamma_\theta\}}d\theta \, .$$
For any $u\neq v$, the Lebesgue measure of $\theta$'s for which $u \in \gamma_\theta$ is precisely the angle between the two tangents to $C_u$ emanating from the origin, as shown in Figure \ref{fig:diskpath}. If $\alpha$ is half that angle, then $\sin(\alpha)=\rad_v(u)/d(u,v)$ where $d(u,v)$ is the Euclidean distance between the center of $C_u$ and the origin. It is clear that $\alpha \in (0,\pi/2)$ so that $\alpha \leq {\pi \over 2}\sin(\alpha)$, and we obtain
\[
\begin{split}
\int_{0}^{2\pi} {\rm len}_m(\gamma_\theta)d\theta &\leq 2\pi m(v) + \pi \sum_{u\neq v} \frac{m(u) \rad_v(u)}{d(u,v)} \\
&\leq 2\pi \sqrt{\area(m)} \left( 1 + \frac{1}{4}\sum_{u \neq v} \frac{(\rad_v(u))^2}{d(u,v)^2} \right)^{1/2}
\end{split}
\]
where the last inequality is Cauchy-Schwarz.

To bound the sum, let $D_u$ denote the interior of $C_u$. The function $f(x,y) = 1/(x^2 + y^2)$ on $\R^2 \setminus \{0\}$ is subharmonic (this can be checked by computing the Laplacian) and so its value at the center of $C_u$ is less than its average value on $D_u$. In other words,
\[
\frac{1}{d(u,v)^2} \leq \frac{1}{\pi \cdot (\rad_v(u))^2} \iint_{D_u} \frac{1}{x^2 + y^2} \, dx\,dy \,.
\]
The disks $D_u$ for $u \neq v$ are disjoint and all contained in the annulus $A' = \{z : \rad_v(v) \leq |z| \leq 1 \}$. Hence
\[
\begin{split}
\sum_{u \neq v} \frac{(\rad_v(u))^2}{d(u,v)^2} &\leq \sum_{u \neq v} \frac{1}{\pi} \iint_{D_u} \frac{1}{x^2 + y^2} \, dx\,dy \leq \frac{1}{\pi} \iint_{A'} \frac{1}{x^2 + y^2} \, dx\,dy \\
&= 2 \int_{\rad_v(v)}^1 \frac{1}{r} \, dr = 2 \log\left(\frac{1}{\rad_v(v)} \right) \, .
\end{split}
\]

We conclude that
\[
\frac{1}{2\pi} \int_{0}^{2\pi} {\rm len}_m(\gamma_\theta)d\theta \leq \sqrt{\area(m)} \sqrt{1 + \frac{1}{2} \log\left(\frac{1}{\rad_v(v)}\right) }
\]
so there is $\theta \in (0,2\pi)$ such that ${\rm len}_m(\gamma_\theta)$ has the same bound, proving \eqref{eq:rad_vv}. \qed

\subsection{Proof of Theorem \ref{thm:converse}} \label{sec:converse-proof}
Because all the paths in $\Gamma_{\rho,v}$ and $\Gamma_v^\partial$ pass through $v$, the left side of \eqref{eq:converse} is at least $1$ and we may assume that $\e$ is sufficiently small.

For $z\in \DD$ and $r_2>r_1>0$, we denote by $A_z(r_1,r_2)$ the annulus $\{w: r_1 < |z-w| < r_2\}$. Set $K = \lfloor \log_2(1/5\e) \rfloor - 2$. For $0 \leq k \leq K$, let $r_k = 2^k (5\e)$, so that $r_0 = 5\e$ and $r_K \in [1/8, 1/4]$. Given $v \in V$, let $c_v$ be the center of the circle $C_v \in \mcl P_\rho$ and let $B_k$ be the set of vertices $u$ for which $C_u \subset A_{c_v}(r_{k-1},r_{k})$. We define $m: V \to [0,\infty)$ by
\[
m(u) = \begin{cases} \rad_\rho(u)/r_k & \text{if } u\in B_k \text{ for some } 1 \leq k \leq K \\
0 & \text{otherwise}. \end{cases}
\]

For each $1\le k \le K$ we have
\[
\sum_{u\in B_k} m(u)^2 = \frac{1}{\pi r_k^2} \sum_{u\in B_k} \pi (\rad_\rho(u))^2 \le 1 \, ,
\]
hence $\area(m) \leq K$.

If $|c_v| \geq 1/2$ then every path $\gamma \in \Gamma_{\rho,v}$ must visit a vertex $v'$ with $|c_{v'} - c_v| \geq 1/4 \geq r_K$. If $|c_v| \leq 1/2$ then the same property holds for every path $\gamma \in \Gamma_v^\partial$.

Suppose $\gamma$ is a path in $G$ that starts at $v$ and visits a vertex $v'$ such that $|c_{v'} - c_v| \geq r_K$. The packing $\mcl P_\rho$ induces an embedding of $G$ in the plane where each vertex $u$ is drawn at $c_u$ and the edges $(u,u')$ are drawn as the straight lines $c_u c_{u'}$. Under this embedding, the path $\gamma$ becomes a piecewise linear curve in $\DD$ that crosses from the inside to the outside of each annulus $A_{c_v}(r_{k-1},r_k)$. We abuse notation by referring to this curve also as $\gamma$. If $\gamma$ goes through a vertex $u\in B_k$, then the Euclidean length of $\gamma \cap C_u$ is $2 \rad_\rho(u)$ while the contribution of $u$ to $\mathrm{len}_m(\gamma)$ is $\rad_\rho(u)/r_k$, thus the ratio between them is $1/2r_k$. As all the circles in $\mcl P_\rho$ have radius at most $\e$, any circle $C_u$ whose interior intersects $A_{c_v}(r_{k-1} + 2\e, r_k - 2\e)$ must be contained entirely in $A_{c_v}(r_{k-1},r_k)$. It follows that the contribution of $B_k$ to $\mathrm{len}_m(\gamma)$ is at least
\[
[(r_k - 2\e) - (r_{k-1} + 2\e)] / 2r_k \geq 1/20 \, .
\]
Therefore $\mathrm{len}_m(\gamma) \geq K/20$ and so $(\mathrm{len}_m(\gamma))^2 / \area(m) \geq K/400$.

Hence, we have that either ${\rm VEL}(\Gamma_{\rho,v})\ge K/400$ or ${\rm VEL}(\Gamma_{v}^\partial) \ge K/400$. \qed

\subsection{Proof of Theorem \ref{thm:nomacroscopic}} \label{sec:matingproof}
We will adapt the argument used to prove Theorem \ref{thm:converse}. In that argument, the smallness of all the circles led to a lower bound on vertex extremal length. In the present setting, results from \cite{GMS17b} show that with high probability in the a priori embedding of the mating-of-trees map, all of the cells are small and the sum of the squared diameters of the cells in a sufficiently nice region is proportional to the area of the region. We use this information to repeat the argument from Section \ref{sec:converse-proof} and get a very similar VEL lower bound, which plugs into Theorem \ref{thm:criterion} to complete the proof.

From the definition of the a priori embedding, it is immediate that given any path $(v_0,v_1,\ldots,v_k)$ in $\mcl G^\e$ there is a continuous curve in the plane that passes in order through the cells $H_{v_j}^\e$. The following lemma shows that one can also translate line segments in the plane into paths in $\mcl G^\e$.

\begin{lemma} \label{lem:pathtranslate}
The a priori embedding $\{H_v^\e\}_{v \in \mcl V \mcl G^\e}$ almost surely has the following property. Given any line segment $z_1 z_2$ in the plane, let $v_1, v_2 \in \mcl V \mcl G^\e$ satisfy $z_1 \in H_{v_1}^\e$ and $z_2 \in H_{v_2}^\e$. Then there is a path in $\mcl G^\e$ from $v_1$ to $v_2$ whose vertices are all in the set $\mcl V \mcl G^\e(z_1 z_2)$.
\end{lemma}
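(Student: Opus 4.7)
The plan is to walk along the segment $z_1 z_2$, list the cells it enters in order, and show that consecutive entries are adjacent in $\mcl G^\e$. The main ingredient will be that $\mcl G^\e$ is almost surely a triangulation, which forces exactly three cells to meet at every corner point of the tiling and forces those three cells to be pairwise adjacent (since they bound a common triangular face of $\mcl G^\e$).

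First I would record the a.s.\ regularity of the a priori embedding: $\mcl G^\e$ is a (simple) triangulation, the family $\{H_v^\e\}$ tiles $\C$ by compact connected sets with disjoint interiors, every bounded set meets only finitely many cells, and at most three cells meet at any single point. Given these, fix a segment $z_1 z_2$ and parametrize it by $t \in [0,1]$. Set $u_0 := v_1$ and, inductively, let $t_i^\ast = \sup\{t : z(t) \in H_{u_i}^\e\}$; pick $u_{i+1}$ to be any cell, distinct from $u_i$, that contains $z(t_i^\ast)$. Local finiteness guarantees this process terminates at some $u_k$ containing $z_2$, and all $u_i$ lie in $\mcl V \mcl G^\e(z_1 z_2)$ by construction. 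To conclude I would show $u_i \sim u_{i+1}$ in $\mcl G^\e$: at $z(t_i^\ast)$ the cells $H_{u_i}^\e$ and $H_{u_{i+1}}^\e$ intersect, and either their intersection contains a non-trivial boundary arc (so they share an edge of $\mcl G^\e$) or they meet only at the point $z(t_i^\ast)$. In the latter case, the triangulation property means exactly three cells meet at this point and form a triangular face, so any two of them, in particular $u_i$ and $u_{i+1}$, are adjacent. Finally, if the chosen $u_k$ is not $v_2$, both are cells containing $z_2$, hence sit at a common corner and are adjacent by the same argument, so we append $v_2$ to obtain the desired path.

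The main obstacle is the corner/degenerate case at $z(t_i^\ast)$: a priori the segment could pierce a boundary vertex of the cell complex, travel along a shared arc for a positive interval, or reenter a previously visited cell. Reentries are harmless since they only lengthen the list without breaking adjacency of consecutive entries. Travel along a shared arc is also harmless because the two cells flanking that arc are adjacent by definition. The truly delicate point is ruling out that more than three cells meet at a single point, which would leave $u_i$ and $u_{i+1}$ potentially non-adjacent; this is precisely where the a.s.\ triangulation property of $\mcl G^\e$, together with the matching between triangular faces and corner points of the tiling, is used in an essential way.
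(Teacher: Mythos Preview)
The paper does not prove this lemma directly: it cites \cite{GMS17}, where connectivity along horizontal and vertical lines is established from the explicit SLE/LQG construction of the cells $H_v^\e$, and merely observes that the argument there is insensitive to the angle of the line. You instead attempt a self-contained topological argument using only the abstract properties listed in Section~\ref{sec:intromating}.

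There is a genuine gap in your approach. The linchpin is the claim that at every point where several cells meet, exactly three meet and are pairwise adjacent---i.e., that corner points of the tiling correspond bijectively to triangular faces of $\mcl G^\e$. For a polygonal or CW-type cell complex this duality is standard, but it is \emph{not} a consequence of the properties you list (compact connected cells with disjoint interiors, adjacency via a shared non-trivial boundary arc). The cells $H_v^\e$ arise from an SLE-based construction and have fractal boundaries; nothing stated in this paper excludes, for instance, two cells touching at a single point with no third cell present there, or more than three cells accumulating at a boundary point that does not correspond to any face. The argument in \cite{GMS17} goes through the specific construction of the cells, not through this abstract route, and without that input the adjacency of $u_i$ and $u_{i+1}$ at a corner remains unjustified. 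Two smaller issues: $\mcl G^\e$ is not simple (the paper explicitly allows up to two parallel edges between a pair of vertices), and your termination argument can loop if several cells share the same last-exit time $t_i^\ast$---you would need to always select the cell containing $z(t_i^\ast+\delta)$ for small $\delta>0$ so that $t_{i+1}^\ast > t_i^\ast$ strictly.
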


\begin{proof}
This statement is proved in \cite{GMS17} (see the section ``Connectivity along lines'' in the proof of Proposition 3.1) for horizontal and vertical lines. The proof extends without change to the case of lines at any angle.
\end{proof}

We now quote two results from \cite{GMS17b}. For each $v\in \mcl V \mcl G^\e$ we write $\diam(H_v^\e)$ for the diameter of the cell $H_v^\e$ associated with $v$, as described in Section \ref{sec:intromating}.

\begin{thm}\label{thm:GMSmaxdiam} There exist constants $c,q>0$ (depending only on the parameter $\gamma\in(0,2)$ discussed in Section \ref{sec:intromating}) such that for $\e$ sufficiently small,
$$ \Prob \Big ( \diam(H_v^\e) \leq \e^q \quad \forall \,\, v \in \mcl V \mcl G_1^\e \Big ) \geq 1-\e^c \, .$$
\end{thm}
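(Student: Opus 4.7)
The plan is to reduce the diameter bound to a modulus-of-continuity statement for the correlated Brownian pair $(L,R)$ that encodes the mating-of-trees map. In the peanosphere construction that defines the a priori embedding, each cell $H_v^\e$ (for $v = x \in \e\Z$) is a region traced out by an SLE-type space-filling curve during a quantum-natural-time interval of length $\e$ around $x$. Thus bounding $\diam(H_v^\e)$ amounts to controlling how far this curve can travel on an interval of quantum time of length $\e$, which in turn is controlled by the fluctuations of $L$ and $R$ near $x$.

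First I would localize to a bounded time window. Every $v \in \mcl V \mcl G_1^\e$ corresponds, after the modifications in Section \ref{sec:intromating}, to a point $x \in \e\Z$ whose cell either meets $\DD$ or is enclosed by a short cycle of cells that meet $\DD$. Using scale-invariance of $(L,R)$ and standard tail bounds, with probability at least $1 - \e^{c_1}$ there is a deterministic constant $T$ so that all such $x$ lie in $[-T, T]$, giving at most $O(\e^{-1})$ cells to control. This reduction is essentially \cite[Lemma 2.7]{GMS17b} applied to the slightly enlarged vertex set $\mcl V \mcl G_1^\e$.

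Next I would apply the quantitative Hölder estimate for space-filling SLE parametrized by quantum natural time (available in the $\gamma$-LQG literature): for some $q' > 0$ depending on $\gamma$, with probability at least $1 - \e^{c_2}$ the embedding curve $\eta$ satisfies $|\eta(s) - \eta(t)| \leq \e^{q'}$ whenever $|s - t| \leq \e$ and $s, t$ lie in a bounded window. Since $H_v^\e = \eta([x-\e, x])$, this immediately yields $\diam(H_v^\e) \leq \e^{q'}$ for every relevant $v$. A union bound over the $O(\e^{-1})$ cells costs only a factor of $\e^{-1}$, which is absorbed into the $\e^c$ slack by taking $c < \min(c_1, c_2)$ and $q$ slightly smaller than $q'$.

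The main obstacle is the Hölder estimate for the space-filling curve with explicit polynomial-in-$\e$ probability of failure. Softer almost-sure continuity statements for SLE are classical, but to run the union bound one needs a quantitative tail bound on the modulus of continuity with exponent depending explicitly on $\gamma \in (0,2)$, and this is where the real work of \cite{GMS17b} lies. Once such an estimate is granted, the remaining steps are routine: localization of the relevant cells to a bounded time window, and a union bound using that the total number of cells of $\mcl G_1^\e$ in this window is polynomial in $\e^{-1}$.
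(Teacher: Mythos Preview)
Your proposal takes a very different route from the paper and, in doing so, misidentifies where the actual work lies. The paper does not re-derive anything about H\"older continuity of the space-filling curve or the modulus of continuity of $(L,R)$: it simply cites \cite[Lemma~2.7]{GMS17b} as a black box, which already gives $\diam(H_v^\e)\le\e^q$ with probability at least $1-\e^c$ simultaneously for all $v\in\mcl V\mcl G^\e(B(0,r))$ for some fixed $r>1$. Your program of localizing to a time window and then applying a quantitative H\"older estimate for space-filling SLE is essentially an outline of how one would \emph{prove} that lemma, not of how the present paper uses it.

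The only genuine content of the paper's proof is the extension from $\mcl V\mcl G^\e(B(0,r))$ to the possibly larger set $\mcl V\mcl G_1^\e$, and here your proposal has a gap. You note correctly that the extra vertices are those enclosed by a cycle of $\mcl G_0^\e$-vertices, but then assert that ``all such $x$ lie in $[-T,T]$'' and that this ``is essentially \cite[Lemma~2.7]{GMS17b} applied to the slightly enlarged vertex set''. That lemma says nothing about time windows, and a vertex $w\in\mcl V\mcl G_1^\e\setminus\mcl V\mcl G_0^\e$ is not a priori known to have $H_w^\e\subset B(0,r)$ or to have bounded time-index; this is precisely what must be shown. The paper's argument is topological: on the good event every cell of $\mcl G_0^\e$ lies in $B(0,1+\e^q)$, and since $w$ is separated in $\mcl G^\e$ from infinity by a cycle of $\mcl G_0^\e$-vertices, its component in the subgraph of $\mcl G^\e$ induced by $\mcl V\mcl G^\e\setminus\mcl V\mcl G_0^\e$ is finite. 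Lemma~\ref{lem:pathtranslate} (connectivity along line segments) then shows that all vertices whose cells meet $\C\setminus B(0,1+\e^q)$ lie in a single infinite component of that subgraph, forcing $H_w^\e\subset B(0,1+\e^q)\subset B(0,r)$ so that the cited diameter bound applies. This planarity-and-connectivity step, not the SLE regularity you emphasize, is the substance of the proof here.
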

\begin{proof}
Lemma 2.7 of \cite{GMS17b} gives the same estimate for all $v \in \mcl V \mcl G^\e(B(0,r))$ for some fixed $r > 1$ (recalling the renormalization by a factor of $2$ in Section \ref{sec:intromating}). On that event, every $v \in \mcl V \mcl G_0^\e$ has $\diam(H_v^\e) \leq \e^q$ and $H_v^\e \subset B(0, 1+\e^q)$. If $w \in \mcl V \mcl G_1^\e \setminus \mcl V \mcl G_0^\e$, then $w$ is inside a cycle in $\mcl G^\e$ of vertices in $\mcl V \mcl G_0^\e$, thus the connected component of $w$ in the subgraph of $\mcl G^\e$ induced by $\mcl V \mcl G^\e \setminus \mcl V \mcl G_0^\e$ is finite. On the other hand, by Lemma \ref{lem:pathtranslate}, a single connected component of this subgraph contains all the vertices $u \in \mcl V \mcl G^\e$ such that $H_u^\e$ intersects the complement of $B(0, 1+\e^q)$. Indeed, any two such cells $H_u^\e, H_{u'}^\e$ can be joined by a piecewise linear curve that avoids the disk $B(0, 1+\e^q)$. It follows that $H_w^\e \subset B(0, 1+\e^q) \subset B(0,r)$ and so the diameter estimate also holds for $H_w^\e$.
\end{proof}

To state the second result from \cite{GMS17b}, we recall the notation $A_z(r_1,r_2)$ from Section \ref{sec:converse-proof}.
For any open set $D$ we write $\area(D)$ for its Lebesgue measure. Let $q>0$ be the constant from Theorem \ref{thm:GMSmaxdiam} and set $\DD' = \{z : |z|\leq 1+ \e^q\}$. Note that on the event of Theorem \ref{thm:GMSmaxdiam}, every $v \in \mcl V \mcl G_1 ^\e$ has $H_v^\e \subset \DD'$.

\begin{thm}\label{thm:GMS} There exist constants $A,c>0$ (depending only on $\gamma$) such that when $\e$ is sufficiently small, for any $|z|\leq 1-\e^c$ and $r_1,r_2$ satisfying $1\geq r_2 \geq r_1 + \e^c$ we have that
$$ \Prob \Big ( \sum_{v \in \mcl V \mcl G ^\e \,:\, H_v^\e \subset A_z(r_1,r_2) \cap \DD'} \diam^2(H_v^\e) \leq A \cdot \area\big ( A_z(r_1,r_2) \cap \DD'\big) \Big ) \geq 1-\e^c \, .$$
\end{thm}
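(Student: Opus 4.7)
The plan is to derive Theorem \ref{thm:GMS} from a ``ball version'' of the same bound that should be extractable from \cite{GMS17b}, via a covering argument and a union bound. As a first step I would locate (or establish from intermediate estimates in \cite{GMS17b}) the following: for fixed $z \in \C$ with $|z| \leq 1 - \e^c$ and fixed $r \in [\e^c, 2]$,
\[
\Prob\Biggl(\,\sum_{v\,:\,H_v^\e \subset B(z,r) \cap \DD'} \diam^2(H_v^\e) \,\leq\, A' \cdot \area\bigl(B(z,r) \cap \DD'\bigr)\Biggr) \,\geq\, 1 - \e^{c'},
\]
for constants $A', c' > 0$ depending only on $\gamma$. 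Morally this should follow from the comparison between the Euclidean size of a cell and its quantum area (which is exactly $\e$ per cell by construction of $\mcl G^\e$), combined with translation invariance in law of the underlying Brownian motion $(L, R)$ that lets one shift a reference estimate at the origin to an arbitrary center $z$.

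Granting the ball estimate, cover the region $A_z(r_1, r_2) \cap \DD'$ by balls $\{B(z_j, \rho)\}_{j=1}^N$ of common radius $\rho := (r_2 - r_1)/2 \geq \e^c/2$, with centers on a $\rho/4$-grid inside $A_z(r_1, r_2)$. Elementary volume counting gives $N = \mathrm{poly}(1/\e)$ and
\[
\sum_{j=1}^N \area\bigl(B(z_j, \rho) \cap \DD'\bigr) \,\leq\, C \cdot \area\bigl(A_z(r_1, r_2) \cap \DD'\bigr)
\]
for a universal constant $C$. On the event of Theorem \ref{thm:GMSmaxdiam}, every cell $H_v^\e$ contained in $A_z(r_1, r_2) \cap \DD'$ has Euclidean diameter at most $\e^q$, hence after shrinking $c$ so that $\e^c \gg \e^q$ each such cell is wholly contained in at least one ball of the cover. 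A union bound over the $N$ balls plus the event of Theorem \ref{thm:GMSmaxdiam} yields a failure probability of at most $N\e^{c'} + \e^c \leq \e^{c''}$ for a suitably smaller constant $c'' > 0$, after which summing the ball estimate over $j$ gives the claimed annulus bound with $A := A' C$.

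The main obstacle is the ball estimate itself: it is the substantive probabilistic input and encodes the defining feature of the a priori embedding, namely that the induced cell geometry carries the Liouville quantum area measure to Lebesgue measure up to controlled fluctuations. This really must be extracted from the analysis in \cite{GMS17b} rather than being formal. By contrast the covering step is routine; the only delicate point is handling cells that flirt with the boundary of $\DD'$ or of the annulus, but the strict containment requirement together with the uniform diameter bound $\e^q$ from Theorem \ref{thm:GMSmaxdiam} renders this boundary sliver negligible.
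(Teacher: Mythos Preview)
Your proposal is correct in spirit but takes a longer route than the paper. The paper's proof is a single sentence: it invokes Proposition~2.9 of \cite{GMS17b} directly with $D = A_z(r_1, r_2) \cap \DD'$, checking only that $\area(D)$ is at least of order $\e^c$ under the stated hypotheses on $z, r_1, r_2$. In other words, the estimate you call the ``ball version'' is already stated in \cite{GMS17b} for a general domain $D$ (subject to a lower bound on its area), not just for balls, so your covering-and-union-bound reduction is superfluous. Your instinct that the substantive probabilistic input must be imported from \cite{GMS17b} is exactly right; you simply did not need to pass through balls first. What your approach would buy is robustness: it would still work if only a ball version were available, at the cost of tracking constants so that the polynomial number $N$ of covering balls does not swamp the per-ball failure probability $\e^{c'}$. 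One small caution if you ever carry it out: your area comparison $\sum_j \area(B(z_j,\rho)\cap\DD') \leq C\,\area(A_z(r_1,r_2)\cap\DD')$ requires a little care when part of the annulus lies outside $\DD'$, since the $\rho$-fattening of the annulus may meet $\DD'$ in a set noticeably larger than $A_z(r_1,r_2)\cap\DD'$; the bounded-overlap argument alone is not quite enough there.
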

\begin{proof}
Follows directly from Proposition 2.9 of \cite{GMS17b} by taking $D=A_z(r_1,r_2) \cap \DD'$, noting that $\area(D)$ is at least of order $\e^c$ under our assumptions on $z,r_1,r_2$.
\end{proof}

We now begin the proof of Theorem \ref{thm:nomacroscopic}. We will show that when $\e$ is sufficiently small, with probability at least $1-\e^c$ the map $\mcl G_1^\e$ satisfies
\begin{align}\vel(\Gamma_{\rho,v}) &\geq c \log(1/\e) \quad {\rm for\ all} \quad v \in \mcl V \mcl G_1^\e \cap \mcl V \mcl G^\e ( \{ |z| \geq 1/2 \} ) \label{nomacro.goal} \\
\vel(\Gamma_{v}^\partial) &\geq c \log(1/\e) \quad {\rm for\ all} \quad v \in \mcl V \mcl G^\e ( \{ |z| \leq 1/2\} )\, , \label{nomacro.goal2} \end{align}
for some fixed constant $c>0$. By our construction in Section \ref{sec:intromating} we have that $\mcl G_2^\e$ is a submap of $\mcl G_1^\e$, hence \eqref{nomacro.goal} for $\mcl G_1^\e$ implies the same bound for $\mcl G_2^\e$. Furthermore, since both maps have the same outer face, \eqref{nomacro.goal2} for $\mcl G_1^\e$ again implies the same estimate for $\mcl G_2^\e$. Thus, by Theorem \ref{thm:criterion}, these two assertions conclude the proof and it suffices to prove \eqref{nomacro.goal} and \eqref{nomacro.goal2} for $\mcl G_1^\e$.

In fact, we prove only \eqref{nomacro.goal}; the proof of \eqref{nomacro.goal2} is very similar. Let $A,c,q>0$ be the constants from Theorems \ref{thm:GMSmaxdiam} and \ref{thm:GMS}. By Theorem \ref{thm:GMSmaxdiam}, with probability at least $1-\e^c$ we have that $\diam(H_v^\e) \leq \e^q$ for all $v \in \mcl V \mcl G_1^\e$. Let us assume this event holds.

Set $c'=\min(c,q)/4$ and $K = \lfloor c' \log_2 (1/\e) \rfloor - 3$. For $0 \leq k \leq K$, set $r_k = 2^{k+1} \e^{c'}$. Note that $r_0 = 2\e^{c'}$ and that $r_K \in [1/8,1/4]$. We would like to draw concentric circles of radius $r_0,\ldots,r_K$ around each cell $H_v^\e$, as in the proof of Theorem \ref{thm:converse}, but since we will need to use Theorem \ref{thm:GMS} in each annulus we must take care that the total number of annuli is less than $\e^{-c}$. For this reason, we specify the center points of the circles using the following procedure.

Denote $\DD_\e = \{ z : |z|\leq 1-\e^c\}$. We find points $z_1, \ldots, z_L$ in $\DD_\e$ such that
\begin{enumerate}
  \item Every cell corresponding to a vertex $v\in \mcl V \mcl G_1^\e$ is contained in $B(z_i,r_0)$ for some $1 \leq i \leq L$, and
  \item $L \leq 16\e ^{-c/2}$.
\end{enumerate}
Indeed, we take $\{z_1, \ldots, z_{L}\}$ to be an $\e^{c'}/2$-net in $\DD_\e$, that is, for any $z\in \DD_\e$ there is some $1 \leq i\leq L$ such that $|z - z_i| \leq \e^{c'}/2$ and $B(z_i,\e^{c'}/4) \cap B(z_j,\e^{c'}/4) = \varnothing$ for any $i\neq j$. Such a set of points can be easily obtained greedily: at each stage, if there is a $z\in \DD_\e$ such that $B(z,\e^{c'}/4) \cap B(z_i,\e^{c'}/4) = \varnothing$ for all $i$, then we add $z$ to the current set of $z_i$'s. Since the added disks are disjoint, this process must end after at most $16 \e^{-2c'}$ steps, so by our choice of $c'$ we have that $L \leq 16\e^{-c/2}$. Due to the maximality of this set of points, we obtain that $\DD_\e \subset \cup_{1 \leq i \leq L} B(z_i,\e^{c'}/2)$ and since $c' < \min(c,q)$ we have that $\DD' \subset \cup_{1 \leq i \leq L} B(z_i,\e^{c'})$. Lastly, for all $v\in \mcl V \mcl G_1^\e$, since $\diam(H_v^\e) \leq \e^q$ we can find $i$ such that $H_v^\e \subset B(z_i,2\e^{c'})$.  We conclude that both (1) and (2) hold for the point set $\{z_1, \ldots, z_L\}$.

We now apply Theorem \ref{thm:GMS} $LK$ times and obtain that with probability at least $1 - LK \e^c$, 
\begin{equation}\label{eq:forvelbound} \sum_{v \in \mcl V \mcl G ^\e \,:\, H_v^\e \subset A_{z_i}(r_{k-1},r_k) \cap \DD'} \diam^2(H_v^\e) \leq A \cdot \pi r_k^2  \qquad 1 \leq i \leq L,\ 1 \leq k \leq K \, .\end{equation}

Assume that \eqref{eq:forvelbound} holds and let $v \in \mcl V \mcl G_1^\e \cap \mcl V \mcl G^\e ( \{ |z| \geq 1/2 \} )$. Then $H_v^\e \subset B(z_i,r_0)$ for some $1 \leq i \leq L$ by the construction above. Let $B_k$ be the set of vertices $u \in \mcl V \mcl G_1^\e$ for which $H_u^\e \subset A_{z_i}(r_{k-1},r_k)$. We define $m: \mcl V \mcl G_1^\e \to [0,\infty)$ by
\[
m(u) = \begin{cases} \diam(H_u^\e)/r_k & \text{if } u \in B_k \text{ for some } 1 \leq k \leq K \\
0 & \text{otherwise}. \end{cases}
\]
Due to \eqref{eq:forvelbound}, the contribution to $\area(m)$ from each $B_k$ is at most $\pi A$, hence $\area(m) \leq \pi AK = O(\log(1/\e))$.

Let $\gamma$ be any path in $\mcl G_1^\e$ from $v$ to $v$ that has winding number $1$ around $\rho$. If we write $\gamma = (v_0,v_1,\ldots,v_m)$ with $v_0 = v_m = v$, then we may draw a closed curve in the plane that starts and ends at a point $w \in H_v^\e$ and passes in order through the cells $H_{v_j}^\e$. We now argue that this curve must cross each of the annuli $A_{z_i}(r_{k-1},r_k)$ for $1\leq k \leq K$. The cell $H_\rho^\e$ either contains the origin or is within distance $2\e^q$ of the origin in case the cell containing the origin was collapsed during the transition from $\mcl G_1^\e$ to $\mcl G_2^\e$. Thus we may choose $z \in H_\rho^\e$ with $|z| \leq 2\e^q$. Let $w' = -2w/|w|$ be on the opposite side of the origin from $w$ with $|w'| = 2$. We use Lemma \ref{lem:pathtranslate} to convert the straight line $zw'$ into a path in $\mcl G^\e$ from $\rho$ to $\mcl V \mcl G^\e \setminus \mcl V \mcl G_1^\e$. Since $\gamma$ has winding number $1$ around $\rho$, it separates $\rho$ from $\mcl V \mcl G^\e \setminus \mcl V \mcl G_1^\e$. (Here we used the property that every vertex in $\mcl V \mcl G^\e$ which is enclosed by a cycle in $\mcl G_1^\e$ must be an element of $\mcl V \mcl G_1^\e$.) Therefore, this new path intersects $\gamma$ at some $v_j$ and it follows that the line $zw'$ passes through $H_{v_j}^\e$. Since $|w| \geq 1/2 - \e^q$ and $r_K \leq 1/4$, in order to visit the cell $H_{v_j}^\e$ the curve induced by $\gamma$ must cross all of the annuli.

By our choice of $m$, for each $k$ the vertices $u \in B_k$ contribute at least $1/2 - O(\e^{q-c'}) \geq 1/4$ to len$_m(\gamma)$. Hence len$_m(\gamma) = \Omega(\log(1/\e))$ and we conclude that $\vel(\Gamma_{\rho,v}) = \Omega(\log(1/\e))$. This confirms \eqref{nomacro.goal}. The proof of \eqref{nomacro.goal2} is very similar; we omit the details, mentioning only that if $u \in \mcl V \mcl G_1^\e$ is a vertex of the outer face then $H_u^\e$ must intersect $\partial \DD$. \qed

\section*{Acknowledgements}
We thank Ewain Gwynne, Jason Miller and Scott Sheffield for useful discussions and their permission to use Figure \ref{fig:mated-crt-map}. This research is supported by ISF grants 1207/15 and 1707/16 as well as ERC starting grant 676970 RANDGEOM. The second author is supported by a Zuckerman Postdoctoral Fellowship.

\footnotesize{
\bibliographystyle{abbrv}
\bibliography{VEL-macroscopic}
 }

\end{document}